\theoremstyle{plain}
\newtheorem{theorem}{Theorem}[section]
\newtheorem{corollary}[theorem]{Corollary}
\theoremstyle{definition}
\newtheorem{remark}[theorem]{Remark}
\def\R{{\mathbb R}}
\def\N{{\mathbb N}}
\def\cL{\mathcal{L}}
\def\e{\mathrm e}
\def\d{\mathrm{d}}
\def\div{\mathrm{div\,}}
\def\p{\mathrm{p}}
\def\q{\mathrm{q}}
\numberwithin{equation}{section}
\title[The Oseen-Navier-Stokes flow with rotating effects ]{A non-autonomous model problem for the Oseen-Navier-Stokes flow with rotating effects
} 
\author{Matthias Geissert}
\address{ Department of Mathematics\\ Technische Universit\"at Darmstadt\\ Schlossgartenstr. 7\\ 64289 Darmstadt,
Germany} \email{geissert@mathematik.tu-darmstadt.de}
\author{Tobias Hansel}
\address{ International Research Training Group 1529\\ Technische Universit\"at Darmstadt\\ Schlossgartenstr. 7 \\ 64289 Darmstadt,
Germany} \email{hansel@mathematik.tu-darmstadt.de}
\keywords{Navier-Stokes flow, Oseen flow, rotating obstacle, non-autonomous PDE} \subjclass[2000]{Primary 35Q30; Secondary  76D03, 76D05}
\thanks{The second author was supported by the DFG International Research Training Group 1529 on \emph{Mathematical Fluid Dynamics} at TU Darmstadt}
\begin{document}

%
%

\maketitle 

%
%
%
\begin{abstract}
Consider the Navier-Stokes flow past a rotating obstacle with
a general time-dependent angular velocity and a time-dependent
outflow condition at infinity. After rewriting the problem on a fixed domain,
one obtains a non-autonomous system of equations with unbounded drift terms. 
It is shown that the solution to a model problem in the whole space case $\R^d$
is governed by a strongly continuous evolution
system on $L^p_\sigma(\R^d)$ for $1<p<\infty$. The strategy is to
derive a representation formula, similar to the one known in the case
of non-autonomous Ornstein-Uhlenbeck equations. This explicit formula allows to prove $L^p$-$L^q$ estimates
and gradient estimates for the evolution system. These results are key ingredients to 
obtain (local) mild solutions to the full nonlinear problem by a version of Kato's iteration 
scheme. 

\end{abstract}

\section{Introduction and main result}
In this paper we consider  a model problem in $\R^d$ for the flow of an incompressible, viscous fluid
past a rotating obstacle with an additional time-dependent outflow condition at
infinity. The equations describing this problem are the Navier-Stokes
equations in an exterior domain varying in time with an additional
condition for the velocity field at infinity.

In order to motivate our model problem, let $\mathcal O \subset \R^d$ be a compact obstacle with
smooth boundary, let $\Omega:=\R^d \setminus \mathcal O$ be the
exterior of the obstacle and 
let $m \in C([0,\infty); \R^{d\times d})$ be a
continuous matrix-valued function.
Then, the exterior of the rotated obstacle at time $t>0$ is represented by $\Omega(t):=
Q(t)\Omega$ where $Q(t)$ solves the ordinary differential equation
\begin{equation}
\left\{\begin{array}{rcll}
\partial_t Q(t) &=& m(t)Q(t),& t>0,\\[0.2cm]
Q(0)&=&\mathrm{Id}.
\end{array}\right.
\end{equation}\normalsize
With a prescribed velocity field $v_\infty\in C^1([0,\infty);\R^d)$
at infinity, the equations for the fluid on the time-dependent
domain $\Omega(t)$ with no-slip boundary condition
take the form 
\begin{align}\label{eq:NS_2}
v_t-\Delta v +v\cdot \nabla v+\nabla \q&=0&\quad\quad\quad\mbox{in $\Omega(t)\times (0,\infty) $,}\notag\\
\div v&=0&\quad\quad\quad\mbox{in $\Omega(t)\times (0,\infty) $,}\notag\\
v(t,y)&=m(t)y&\quad\quad\quad\mbox{ on $\partial\Omega(t)\times (0,\infty) $,}\\
\lim_{|y|\to \infty} v(t,y)&=v_\infty(t)&\quad\quad\quad\mbox{ \mbox{for} $t\in(0,\infty) $,}\notag\\
v(0,y)&=u_0(y)&\quad\quad\quad\mbox{in $\Omega$}\notag,
\end{align}
where $v$ and $\q$ are the unknown velocity field and the pressure of
the fluid, respectively. 

The disadvantage of this description is the
variability of the domain $\Omega(t)$, and the fact that the
equations do not fit into the $L^p$-setting, due the velocity
condition at infinity. Assume for the time beeing that $m(t)$ is skew
symmetric for $t>0$; this implies that for all $t>0$ the matrix $Q(t)$ is orthogonal. Then, by setting
\begin{equation}
x=Q(t)^{\mathrm T}y, \quad u(t,x)=Q(t)^{\mathrm T}
(v(t,y)-v_\infty(t)), \quad \p(t,x)=\q(t,y),
\end{equation}
the above equations can be transformed to the reference domain
$\Omega$ and the new velocity field $u$ vanishes at infinity. Then
\eqref{eq:NS_2} is equivalent to the following system of equations
\begin{align}\label{eq:NS_3} 
	\left.
	\begin{array}{l}
u_t- \Delta u - \mathcal M(t)x \cdot \nabla u + \mathcal M(t)u  \\[0.1cm]
\quad+ Q(t)^{\mathrm T}v_{\infty}(t)\cdot \nabla u 
-Q(t)^{\mathrm T}\partial_t v_\infty(t)\\[0.1cm]
\quad +u\cdot \nabla u
+\nabla \p
\end{array}\right\}&=0&\;\mbox{in   $\Omega \times (0,\infty)$,}\notag\\
\div u&=0&\quad\mbox{in $ \Omega \times (0,\infty) $,}\notag\\
u(t,x)&= \mathcal M(t)x-Q(t)^{\mathrm T}v_\infty(t)&\;\mbox{on $\partial\Omega \times (0,\infty)$},\\
\lim_{|x|\to \infty} u(t,x)&= 0&\;\mbox{\mbox{for} $t\in(0,\infty) $,}\notag\\
u(0,x)&=u_0(x)&\;\mbox{in $\Omega$},\notag
\end{align}
\normalsize
where $\mathcal M(t):=Q(t)^{\mathrm T}m(t)Q(t)$. The  main difficulty in dealing with this problem arises since the
term $\mathcal M(t)x \cdot \nabla$ has unbounded coefficients. In particular, the
lower order terms cannot be treated by classical perturbation theory for the
Stokes operator. 

Note that even if we assume that $m(t)\equiv m$ is independent of
time (this implies that also $\mathcal M(t)\equiv \mathcal 
M$ is independent of time), equation (\ref{eq:NS_3}) is still non-autonomous due to the
time-dependent first order term $Q(t)^{\mathrm T}v_{\infty}\cdot
\nabla$ (except in some special cases discussed below).

However, by using localization techniques similar to \cite{GHH06}, this problem is
finally reduced to a model problem in $\R^d$ and a model problem in a bounded domain. 
Since $Q(t)\partial_t v_\infty(t)\equiv F(t)$, $t>0$, i.e. it is
constant in space, we may put this term in the pressure $\p$.
Hence, in this paper we discuss the following linearized model problem
in $\R^d$\vspace{0.2cm}
\begin{align} 
	u_t-\Delta
u-\left(M(t)x + f(t)\right)\cdot\nabla u +M(t)u + \nabla \p &=0&\quad\mbox{in $\R^d \times (0,\infty)$,}\notag\\
\div u&=0& \quad\mbox{in $\R^d \times (0,\infty)$,}\label{eq:NS_1}\\
u(0)&=u_0&\quad\mbox{in $\R^d$,}\notag\vspace{0.2cm} 
\end{align}
where we allow general coefficients $M\in C([0,\infty); \R^{d\times d})$ and $f\in C([0,\infty); \R^{d})$. If we set $M(t):=Q(t)^{\mathrm T}m(t)Q(t)$ and
$f(t):=-Q(t)^{\mathrm T}v_\infty(t)$ then we obtain the linearization of equation \eqref{eq:NS_3} with $\Omega=\R^d$. 
Such a model problem also arises in the analysis of a rotating body with translational velocity
$-v_\infty(t)$, see \cite{Far05}.

Existence and uniqueness of a mild solution of an autonomous variant of problem \eqref{eq:NS_2}
{\em without} an outflow condition, i.e.  $v_\infty\equiv0$,
and $m(t)\equiv m$, was investigated in quite a few papers, see
\cite{His99a}, \cite{His99b}, \cite{GHH06} and \cite{HS05}. Hishida was even able to deal
with a time dependent rotation in \cite{His01}, however only for angular velocities of a special form.

For the problem including an additional outflow condition at infinity,
there are only a few results. 
Indeed, in the special case, where $m(t)x=\omega(t) \times x$ and
$\omega:[0,\infty) \rightarrow \R^3$ is the angular velocity of the
obstacle and $v_\infty:[0,\infty)\to\R^3$ a time-dependent outflow velocity, 
Borchers \cite{Bor92} constructed weak non-stationary solutions for the equations (\ref{eq:NS_3}). 
Moreover, Shibata \cite{Shi08} studied
the special case where $m(t)\equiv m$, $v_\infty(t)=v_\infty$ and
$mv_\infty=0$. The condition $mv_\infty=0$,  i.e.
$Q(t)^{\mathrm T}v_\infty= k v_\infty$ for $k\in \{-1,1\}$, ensures that
\eqref{eq:NS_3} is still an autonomous equation and the
solution of \eqref{eq:NS_3} is governed by a $C_0$-semigroup which is
{\em not} analytic.
The physical meaning of the additional condition $mv_\infty=0$ is that the outflow
direction of the fluid is parallel to the axis of rotation of the
obstacle. The stationary problem of this latter situation was
analysed in \cite{Far05}. 

The assumption $mv_\infty=0$ was recentely relaxed by the second author in
\cite{Han10}. Indeed, he was able to deal with the model problem in $\R^d$ where
$m(t)v_\infty\neq 0$ and $v_\infty(t)\equiv v_\infty$.
However he assumes that $m(t)$ and $m(s)$ commute for all
$t,s>0$ which can physically be interpreted by the fact that
the axis of rotation is fixed.

The aim of this work is to remove the latter additional condition, i.e.
$m(t)$ and $m(s)$ need not to commute and $v_\infty$ may be time-dependent.

As usual the Helmholtz projection $\mathbb P$ allows us to rewrite \eqref{eq:NS_1} as an abstract Cauchy problem in
$L^p_\sigma(\R^d)$,
where $L^p_\sigma(\R^d)$ denotes  the space of all
solenoidal vector fields in $L^p(\R)^d$:
\begin{equation}\label{eq:NS_5}
\begin{array}{rclll}
u'(t)- A(t)u(t) &=&0, & t>0,  \\[0.15cm]
u(0)&=&u_0.&
\end{array}
\end{equation}
Here:
\begin{align*}
	A(t)u&:=\mathbb P\left(\Delta u+\left(M(t)x + f(t)\right)\cdot\nabla u
	+M(t)u\right)\\
	D(A(t))&:=\{ u\in W^{2,p}(\R^d)^d\cap L^p_\sigma(\R^d): M(t)x\cdot
	\nabla u\in L^p(\R^d)^d\}.
\end{align*}
Note that it immediately follows from \cite{HS05} that for fixed $t>0$, the operator $A(t)$ is the
generater of a $C_0$-semigroup, which is not analytic.
The fact that the semigroup is not analytic prevents us from employing
standard generation results for evolution systems, see \cite[Chapter
5]{Paz83} and references therein. For the same reason, $L^p$-$L^q$
estimates and gradient estimates don't follow from standard arguments.

Therefore, we first derive a representation formula for the solution of
\eqref{eq:NS_1}. In order to derive this representation formula
we transform  \eqref{eq:NS_1} to a non-autonomous heat equation
which can be explicitly solved, see Section~\ref{sec:cov}.
It turns out that the transformation to a non-autonomous heat equation
is crucial to deal with our problem in this generality since the
different transformation used in \cite{Han10} caused the additional
assumption that $M(t)$ and $M(s)$ commute for all $t,s>0$.

In the following we denote by $\{U(t,s)\}_{t,s\geq 0}$ the evolution system on $\R^d$ generated by the family of matrices $\{-M(t)\}_{t\geq 0}$, i.e.
\begin{equation}
\left\{\begin{array}{lcl}
\partial_t U(t,s) & = & -M(t)U(t,s),\\[0.15cm]
U(s,s) & = & \mathrm{Id}.
\end{array}\right.
\vspace{0.1cm}
\label{eq:evol}
\end{equation}
Note that $\partial_s U(t,s)  =  U(t,s)M(s)$.

We are now ready to present our main result.

\begin{theorem}\label{thm:main}
	Let $1<p<\infty$, $M\in C([0,\infty);\R^{d\times d})$ and $f\in
	C([0,\infty);\R^d)$. The the solution of
\eqref{eq:NS_5} is governed by a strongly continuous evolution system
$\{T(t,s)\}_{t\geq s\geq0}\subset \cL(L^p_\sigma(\R^d)^d)$. 
Moreover, the evolution system $\{T(t,s)\}_{t\geq s\geq0}$ admits the
following properties:
\begin{enumerate}
	\item[(a)] For $T_0>0$ set
	$
		M_{T_0}:=\sup\{\|U(t,s)\|:t,s\in[0,T_0]\}.
	$
	Then for $1<p<\infty$ and $p\leq q \leq \infty$ there exists
	$C:=C(M_{T_0},d)>0$ such that for $u\in L_\sigma^p(\R^d)$
\begin{align}
	\|T(t,s)u\|_{L^q_\sigma(\R^d)} 
	&\leq C
(t-s)^{-\frac{d}{2}\left(\frac{1}{p}-\frac{1}{q}\right)}\|u\|_{L^p_\sigma(\R^d)},&
 0\leq s<t<T_0,\label{eq:LpLqsmoothing}\\
\|\nabla T(t,s)u\|_{L^q(\R^d)} 
&\leq C
(t-s)^{-\frac{d}{2}\left(\frac{1}{p}-\frac{1}{q}\right)-\frac{1}{2}}\|u\|_{L^p_\sigma(\R^d)},
&  0\leq s<t<T_0. \label{eq:GradientEstimate}
\end{align}
In particular, if the evolution system $\{U(t,s)\}_{s,t\geq 0}$ is uniformly
bounded, i.e. $M_{T_0}\leq M$, for some $M>0$ and all $T_0>0$, we may set
$T_0=\infty$. 
\item[(b)]
For $1<p<q<\infty$, $s\geq0$ and $u\in L^p_{\sigma}(\R^d)$ we have
\begin{align*}\label{eq:behavior_ts_1}
	\lim\limits_{t\to s,\ t> s}(t-s)^{\frac d 2 \left(\frac 1 p - \frac 1 q\right)} \|T(t,s)u\|_{L^q_{\sigma}(\R^d)}
	=0 \mbox{ and }
	\lim\limits_{t\to s,\ t> s}
(t-s)^{\frac 1 2 } \|\nabla T(t,s)u\|_{L^p(\R^d)}=0.
\end{align*}
\end{enumerate}
\end{theorem}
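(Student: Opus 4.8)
The plan is to follow the strategy announced in the introduction: reduce \eqref{eq:NS_1} to a non-autonomous heat equation by an explicit change of variables, solve that heat equation by an explicit (Ornstein--Uhlenbeck-type) representation formula, and then transfer the resulting kernel estimates back through the Helmholtz projection. First I would look for a substitution that simultaneously removes the zeroth-order term $M(t)u$ and the unbounded drift $(M(t)x+f(t))\cdot\nabla u$. The natural candidate is
\begin{equation*}
	v(t,x):=U(t,s)^{-1}\,u\bigl(t,\,\psi(t,s,x)\bigr),
\end{equation*}
where $U(t,s)$ is the matrix evolution system from \eqref{eq:evol} (this handles the term $M(t)u$), and $\psi(t,s,\cdot)$ is the affine flow of the characteristic field $x\mapsto M(t)x+f(t)$, i.e. the solution of $\partial_t\psi = M(t)\psi + f(t)$ with $\psi(s,s,x)=x$; concretely $\psi(t,s,x)=U(t,s)^{-1}x + \int_s^t U(t,\tau)^{-1}f(\tau)\,d\tau$ (modulo transposes, which I will fix by matching coefficients against \eqref{eq:evol} and the remark $\partial_s U(t,s)=U(t,s)M(s)$). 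A direct but careful computation with the chain rule should show that $v$ solves a heat equation $v_t = \operatorname{div}(B(t,s)\nabla v)$ with a time-dependent positive-definite matrix coefficient $B(t,s)=U(t,s)^{-\mathrm T}U(t,s)^{-1}$, together with a modified divergence constraint; because all transformations are affine in $x$ with matrix coefficients bounded on compact time intervals, the divergence-free condition is preserved up to an invertible linear change, and the pressure $\nabla\p$ is absorbed in the usual way via $\mathbb P$.

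Next, since $B(t,s)$ depends only on $t,s$ and not on $x$, the heat equation $v_t=\operatorname{div}(B(t,s)\nabla v)$ has an explicit Gaussian solution kernel: writing $\Sigma(t,s):=2\int_s^t B(\tau,s)\,d\tau$ (a symmetric positive-definite matrix for $t>s$, with determinant comparable to $(t-s)^d$ uniformly for $s,t\in[0,T_0]$ by the bound $M_{T_0}$), the solution is convolution against the anisotropic Gaussian $G_{\Sigma(t,s)}(x)=(2\pi)^{-d/2}(\det\Sigma)^{-1/2}\exp(-\tfrac12 x^{\mathrm T}\Sigma^{-1}x)$. Undoing the substitution produces an explicit formula for $T(t,s)u$ as $U(t,s)$ times a convolution-type operator composed with the affine maps $\psi$; strong continuity of $\{T(t,s)\}$, the evolution law $T(t,s)T(s,r)=T(t,r)$, and the fact that each $A(t)$ generates the correct $C_0$-semigroup (cited from \cite{HS05}) are then read off from the formula and the uniqueness of solutions. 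For the quantitative bounds in (a), I would estimate $\|G_{\Sigma(t,s)}\|_{L^r}$ and $\|\nabla G_{\Sigma(t,s)}\|_{L^r}$: the spectral bounds $c(t-s)\le \Sigma(t,s)\le C(t-s)$ (as quadratic forms, with constants depending on $M_{T_0},d$) give $\|G_{\Sigma(t,s)}\|_{L^r}\le C(t-s)^{-\frac d2(1-\frac1r)}$ and one extra factor $(t-s)^{-1/2}$ for the gradient. Young's inequality with $\tfrac1q+1=\tfrac1r+\tfrac1p$ then yields \eqref{eq:LpLqsmoothing} and \eqref{eq:GradientEstimate}, with the composition by the measure-preserving-up-to-constant affine maps $\psi$ and the bounded matrix $U(t,s)$ contributing only the constant $C(M_{T_0},d)$; boundedness of $\mathbb P$ on $L^q$ for $1<q<\infty$ (and a separate elementary argument for the endpoint $q=\infty$, using that the kernel is literally a Gaussian so no singular integral is involved) closes the estimates, and the uniform-boundedness remark about $T_0=\infty$ is immediate since all constants depend on $M_{T_0}$ only.

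For part (b) I would argue by density: the two limits are clearly $0$ when $u$ lies in the dense subspace $L^p_\sigma\cap L^q_\sigma$ (resp. a subspace on which $\nabla T(t,s)u$ stays bounded, e.g. $u\in D(A(s))\cap\cdots$), because then $(t-s)^{\frac d2(\frac1p-\frac1q)}\|T(t,s)u\|_{L^q}\le C(t-s)^{\frac d2(\frac1p-\frac1q)}\|u\|_{L^q}\to0$; the general case follows from the uniform bound \eqref{eq:LpLqsmoothing} (resp. \eqref{eq:GradientEstimate}) via the standard $3\eps$-argument. I expect the main obstacle to be \emph{bookkeeping in the change of variables}: getting the transposes and the order of composition in $\psi$ and $U(t,s)$ exactly right so that the transformed equation is genuinely $v_t=\operatorname{div}(B\nabla v)$ with no leftover first- or zeroth-order terms, and verifying that the divergence-free constraint and the Helmholtz projection interact cleanly with the affine substitution (in particular that $\mathbb P$ and the substitution ``almost commute'' up to terms that can be pushed into the pressure). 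Once the representation formula is established in the correct form, the estimates are routine Gaussian computations.
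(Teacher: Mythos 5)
Your proposal follows essentially the same route as the paper: conjugate by $U(t,s)$ and compose with the appropriate affine flow so as to kill both the unbounded drift and the zeroth-order term $M(t)u$, reducing to a non-autonomous heat equation with an $x$-independent diffusion matrix; solve it by an explicit anisotropic Gaussian kernel built from the covariance $Q_{t,s}=\int_s^t U(s,r)U^*(s,r)\,\d r$; derive \eqref{eq:LpLqsmoothing} and \eqref{eq:GradientEstimate} from the bounds $\|Q_{t,s}^{-1/2}\|\le C(t-s)^{-1/2}$, $(\det Q_{t,s})^{1/2}\ge C(t-s)^{d/2}$ together with Young's inequality; and prove (b) by density against the uniform estimate. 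Two remarks on the details you flagged as the likely obstacles. First, the Helmholtz-projection and divergence concerns dissolve more cleanly than you anticipate: with the paper's substitution $u(t,x)=U(t,s)\,w(t,U(s,t)x+g(t,s))$, the matrix $U(t,s)$ multiplying the unknown and the matrix $U(s,t)^{\mathrm T}$ produced by the chain rule from $\nabla_x$ cancel exactly in the divergence, so $\div_x u=\div_z w$ identically; hence $\tilde T(t,s)$ maps solenoidal data to solenoidal data, $\mathbb P$ never enters the representation formula, and $T(t,s)$ is simply the restriction $\tilde T(t,s)|_{L^p_\sigma}$ (so the $q=\infty$ endpoint needs no special treatment either). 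Second, since $U$ is the evolution system generated by $-M$ in \eqref{eq:evol}, your $\psi$ should track the characteristics $\dot x=-(M(t)x+f(t))$ rather than the flow of $M(t)x+f(t)$, and the inhomogeneous shift comes out as $g(t,s)=\int_s^t U(s,r)f(r)\,\d r$, not $\int_s^t U(t,\tau)^{-1}f(\tau)\,\d\tau$ (the indices are in the wrong slots in your formula); once those signs and compositions are fixed, the transformed equation is exactly $\partial_t w_i=\mathrm{Tr}\,[\,U(s,t)U^*(s,t)\nabla^2 w_i\,]$ as in the paper, and the remainder of your argument is correct.
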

Next we consider the nonlinear problem
\begin{equation}\label{eq:NS_abstract}
\begin{array}{rclll}
u'(t)- A(t)u(t) + \mathbb{P}( (u(t) \cdot \nabla) u(t) )&=&0, & t>0,  \\[0.15cm]
u(0)&=&u_0,&
\end{array}
\end{equation}
with initial value $u_0 \in L^p_{\sigma}(\R^d)$.

For given $0<T_0\leq \infty$,
we call a function $u\in C([0,T_0);L^p_{\sigma}(\R^d))$ a
\textit{mild solution} of (\ref{eq:NS_abstract})  if $u$ satisfies
the integral equation
\begin{equation}\label{eq:Duhamel}
u(t) = T(t,0)u_0 - \int_0^t T(t,s) \mathbb P((u(s) \cdot \nabla)
u(s)) \d s, \quad t>0,
\end{equation}
in $L^p_{\sigma}(\R^d)$. 
By adjusting Kato's iteration scheme (see \cite{Kat84}) to our
situation the existence of a unique (local) mild solution follows, cf.
\cite{Han10} for details.
\begin{corollary}\label{cor:Kato}
Let $2\leq d\leq p \leq q < \infty$, $M\in C([0,\infty);\R^{d\times d})$, $f\in
	C([0,\infty);\R^d)$ and $u_0\in
L^p_{\sigma}(\R^d)$. Then there exists $T_0>0$ and a unique mild
solution $u\in C([0,T_0);L^p_{\sigma}(\R^d))$ of
(\ref{eq:NS_abstract}), which has the properties
\begin{equation}\label{eq:mild_solution_prop1}
t^{\frac{d}{2}\left(\frac{1}{p}-\frac{1}{q}\right)}u(t) \in
C([0,T_0);L^q_{\sigma}(\R^d)),
\end{equation}
\begin{equation}\label{eq:mild_solution_prop2}
t^{\frac{d}{2}\left(\frac{1}{p}-\frac{1}{q}\right)+\frac{1}{2}}\nabla
u(t) \in C([0,T_0);L^q(\R^d)^{d\times d}).
\end{equation}
If $p<q$, then in addition
\begin{equation}\label{eq:mild_solution_prop3}
t^{\frac{d}{2}\left(\frac{1}{p}-\frac{1}{q}\right)}\|u(t)\|_{L^q(\R^d)} +
t^{\frac{1}{2}}\|\nabla u(t)\|_{L^p(\R^d)} \rightarrow 0 \qquad \mbox{as\;} t
\rightarrow 0.
\end{equation}
Moreover, in the case $d=p$ we may set $T_0 =+\infty$
provided
$\|u_0\|_{L^d(\R^d)}$ is small enough and $\{U(t,s)\}_{s,t\geq 0}$ is
uniformly
bounded.

\end{corollary}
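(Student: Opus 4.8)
The plan is to prove Corollary~\ref{cor:Kato} by Kato's iteration scheme, using the linear estimates from Theorem~\ref{thm:main} exactly as one would for the classical Navier--Stokes system, with $\{T(t,s)\}$ playing the role of the heat semigroup. First I would fix $T_0>0$ (to be shrunk later) and introduce the Banach space $X_{T_0}$ of functions $u\in C((0,T_0];L^q_\sigma(\R^d))$ with $u\in C([0,T_0);L^p_\sigma(\R^d))$ such that the quantities
\[
  \|u\|_{X_{T_0}}:=\sup_{0<t<T_0}\|u(t)\|_{L^p_\sigma(\R^d)}+\sup_{0<t<T_0}t^{\frac d2\left(\frac1p-\frac1q\right)}\|u(t)\|_{L^q_\sigma(\R^d)}+\sup_{0<t<T_0}t^{\frac12}\|\nabla u(t)\|_{L^p(\R^d)}
\]
are finite, where the exponents are chosen so that $\frac d2\left(\frac1p-\frac1q\right)<1$ (here one uses $d\le p$ and $p\le q<\infty$ with $q$ not too large, or more precisely the Kato range; this is where the hypothesis $d\le p$ enters). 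On $X_{T_0}$ define the map $\Phi(u)(t):=T(t,0)u_0-\int_0^t T(t,s)\mathbb P\big((u(s)\cdot\nabla)u(s)\big)\,\d s$ and show it is a contraction on a small ball for $T_0$ small; the fixed point is the desired mild solution.

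The heart of the argument is the bilinear estimate. Writing $B(u,v)(t):=\int_0^t T(t,s)\mathbb P\big((u(s)\cdot\nabla)v(s)\big)\,\d s$, I would estimate each of the three norms in $\|B(u,v)\|_{X_{T_0}}$ by inserting the $L^r$--$L^q$ smoothing inequality \eqref{eq:LpLqsmoothing} and the gradient inequality \eqref{eq:GradientEstimate} from Theorem~\ref{thm:main}, together with H\"older's inequality $\|(u(s)\cdot\nabla)v(s)\|_{L^r}\le\|u(s)\|_{L^q}\|\nabla v(s)\|_{L^p}$ with $\frac1r=\frac1p+\frac1q$, and the boundedness of $\mathbb P$ on $L^r_\sigma(\R^d)$. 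In each case one is led to an integral of the form $\int_0^t (t-s)^{-a}s^{-b}\,\d s=C\,t^{1-a-b}$ with $a<1$ and $b<1$, which converges precisely because of the Kato range of exponents; carrying this out for the $L^p$, the weighted $L^q$ and the weighted gradient norm yields $\|B(u,v)\|_{X_{T_0}}\le C(T_0)\|u\|_{X_{T_0}}\|v\|_{X_{T_0}}$ with $C(T_0)$ bounded on $[0,T_0]$, and in fact $C(T_0)\to0$ as $T_0\to0$ when $p<q$ (which produces \eqref{eq:mild_solution_prop3}) while in the borderline case $d=p$ one uses part~(b) of Theorem~\ref{thm:main} to get smallness of the relevant quantities near $t=0$. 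The estimate $\|T(\cdot,0)u_0\|_{X_{T_0}}\le C\|u_0\|_{L^p}$ follows from \eqref{eq:LpLqsmoothing}--\eqref{eq:GradientEstimate} together with strong continuity of the evolution system; for $p<q$ part~(b) again gives that the $L^q$ and gradient contributions tend to $0$ as $t\to0$.

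With these two estimates in hand, the standard fixed-point argument (Banach's contraction mapping theorem applied to $\Phi$ on the ball $\{u:\|u\|_{X_{T_0}}\le 2C\|u_0\|_{L^p}\}$) gives, for $T_0$ small enough, a unique $u\in X_{T_0}$ solving \eqref{eq:Duhamel}; uniqueness in the full class $C([0,T_0);L^p_\sigma)$ follows by the usual Gronwall/continuity argument after shrinking $T_0$. The continuity statements \eqref{eq:mild_solution_prop1}--\eqref{eq:mild_solution_prop2} come from the continuity of $t\mapsto T(t,s)u$ established in Theorem~\ref{thm:main} plus dominated convergence in the Duhamel integral, and \eqref{eq:mild_solution_prop3} from the decay noted above. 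Finally, for the global statement when $d=p$: if $\|u_0\|_{L^d}$ is small and $\{U(t,s)\}$ is uniformly bounded, then by the last sentence of Theorem~\ref{thm:main}(a) one may take $T_0=\infty$ in all the linear estimates with a uniform constant $C$, so the constant in the bilinear estimate on $X_\infty$ is uniformly bounded; choosing $\|u_0\|_{L^d}$ below the resulting threshold makes $\Phi$ a contraction on a small ball in $X_\infty$, yielding a global mild solution. The main obstacle is purely bookkeeping: verifying that for the stated exponent range $2\le d\le p\le q<\infty$ all the time-integrability exponents $a,b$ stay strictly below $1$ (in particular that $q$ can be taken in the admissible Kato range and that the $L^p$-norm estimate of $B$ does not blow up), and tracking the $T_0$-dependence of constants carefully enough to deduce both the short-time existence for arbitrary $u_0$ and the global existence for small $u_0$ in the critical case; since this is essentially the computation carried out in \cite{Han10}, I would refer there for the routine details.
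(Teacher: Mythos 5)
Your proposal matches the paper's approach exactly: the authors do not give a detailed proof of Corollary~\ref{cor:Kato} but simply state that it follows "by adjusting Kato's iteration scheme (see \cite{Kat84}) to our situation," referring to \cite{Han10} for the computational details, and your outline is precisely that scheme, applied with the $L^p$--$L^q$ and gradient estimates of Theorem~\ref{thm:main}(a) in place of the corresponding heat-semigroup estimates and with Theorem~\ref{thm:main}(b) supplying the $t\to0$ smallness needed in the critical case $d=p$. The only thing worth flagging is a small bookkeeping wrinkle you gloss over: the fixed-point argument must be run with a single auxiliary exponent $q_0$ strictly greater than $d$ (so that the singular integral for the weighted gradient norm, of type $\int_0^t(t-s)^{-\frac d{2q_0}-\frac12}s^{-\frac d2(\frac1p-\frac1{q_0})-\frac12}\,\d s$, converges), and the stated properties for all $q\in[p,\infty)$ are then recovered a posteriori; your phrase "with $q$ not too large, or more precisely the Kato range" is loose but points in the right direction.
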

\begin{remark}
	In particular, $\{U(t,s)\}_{s,t\geq 0}$ is
	uniformly bounded if $M(t)$ is skew symmetric for all $t>0$.
\end{remark}
\section{Proof of Theorem~\ref{thm:main}}
Let $M$ be as in  Theorem~\ref{thm:main},
and let $\{U(t,s)\}_{s,t\geq0}$  be the evolution system on $\R^d$ that satisfies \eqref{eq:evol}. 
We consider the system of parabolic equations of the form
\begin{equation}\label{eq:1}
\left\{\begin{array}{rcll}
\partial_t u(t,x)  -\mathcal A(t) u(t,x)&=&0, & \; t> s,\  x\in \R^d,\\[0.15cm]
u(s,x) &=& \varphi(x),& \;x\in \R^d,
\end{array}\right.\end{equation}
for $s\geq 0$ fixed, initial value $\varphi\in L^p(\R^d)^d$ and some $p\in(1,\infty)$. Here the family of operators $\mathcal A(t)$ is of the form
$$
\mathcal A(t)u(x) := \Big( \Delta u_i(t,x) + \langle M(t)x + f(t) , \nabla
u_i(t,x) \rangle \Big)_{i=1}^{d} - M(t)u(t,x), \quad t>0,\; x\in\R^d.
$$
 As in \cite[Lemma
3.2]{GL08a} or \cite{Han10}, we first develop an explicit representation
formula. To be more precise,
 we show in Section~\ref{sec:cov} that for $p\in(1,\infty)$ and $\varphi\in L^p(\R^d)^d$
 the solution $u$ to (\ref{eq:1}) is governed by a strongly continuous evolution system
 $\{\tilde T(t,s)\}_{t\geq s}\subset \cL(L^p(\R^d)^d)$ which is explicitly given by 
 \begin{align}
u(t,x):=& (\tilde T(t,s)\varphi)(x):=(k(t,s,\cdot)*\varphi)(U(s,t)x+g(t,s)),
\quad t>s,\ x\in\R^d,
\label{eq:solformul}
\end{align}
where
 \begin{align}
	k(t,s,x)&:= \frac{1}{(4\pi)^{d/2} (\det Q_{t,s})^{1/2}} U(t,s) 
\mathrm e^{-\frac 1 4 \langle
Q_{t,s}^{-1}x,x\rangle}  \d y,\quad t>s\geq 0,\ x\in\R^d,\label{eq:kern}
\\
g(t,s)&:= \int_s^t U(s,r) f(r) \d r,\quad
Q_{t,s}: = \int_s^t U(s,r)U^*(s,r) \d r, \quad t\geq s\geq0.
\nonumber
\end{align}
%
Similar to \cite{DPL07} one can show that 
for $\varphi\in
C_c^\infty(\R^d)^d$ the solution $u$ of \eqref{eq:1} given by \eqref{eq:solformul}  is a classical
solution. 

A simple calculation shows that $\div \tilde T(t,s)\varphi=0$ for $\varphi\in
C_{c,\sigma}^\infty(\R^d)$ and $t\geq s\geq0$. Hence, the restriction
$T(t,s):=\tilde T(t,s)|_{L^p_\sigma(\R^d)}$ is an evolution system on
$L^p_\sigma(\R^d)$. In particular, $u(t):=T(t,0)u_0$ is a solution to
\eqref{eq:NS_5}.


By similar arguments as in the proofs of \cite[Lemma
3.2]{GL08a} or \cite[Lemma 2.4]{Han10}, for $T_0>0$ there exists $C:=C(d,M_{T_0})>0$ (see
Theorem~\ref{thm:main} for the definition of $M_{T_0}$)
such that
\begin{align}
	\begin{split}
\|Q_{t,s}^{-\frac 1 2}\| \leq C (t-s)^{-\frac 1 2}, \quad 0\leq s<t<T_0,\\
(\det Q_{t,s})^{\frac 1 2} \geq
C(t-s)^{\frac d 2},  \quad 0\leq s<t<T_0.
\end{split}
\label{eq:estimateQ}
\end{align}
Moreover, if $M_{T_0}$ is uniformly bounded in $T_0$ we may write
$T_0=\infty$ in \eqref{eq:estimateQ}.

\begin{proof}[Proof of Theorem~\ref{thm:main}]
We start by showing the estimate (\ref{eq:LpLqsmoothing}). Let $T_0>0$.
By the change
of variables $\xi=U(s,t)x$ and by Young's inequality we obtain
\begin{align*}
 \|T(t,s)u\|_{L_{\sigma}^q(\R^d)}  
 \leq |\det U(s,t)|^{\frac1q}\|k(t,s,\cdot)\|_{L^r(\R^d)}
\|u\|_{L_{\sigma}^p(\R^d)},\quad t> s\geq0,
\end{align*}
where $1<r<\infty$ with $\frac 1 p + \frac 1 r = 1 + \frac 1 q$.
Further, by the change of variable $y= Q_{t,s}^{1/2} z$ we obtain
\begin{align*}
	\|k(t,s,\cdot)\|_{L^r(\R^d)}^r
&= \|U(t,s)\|\int_{\R^d}
\left(\frac1{(4\pi)^\frac d2}
\e^{-\frac{ |z|^2}{4}}\right)^r (\det Q_{t,s})^{\frac {1-r}2} \d
z\\
&\leq C \|U(t,s)\|(\det Q_{t,s})^{\frac{1-r}2},\quad t\geq s\geq0, 
\end{align*}
for some $C>0$. Now \eqref{eq:estimateQ} 
yields \eqref{eq:LpLqsmoothing}.

To prove the gradient estimate (\ref{eq:GradientEstimate}), we first
observe that 
\begin{align*}
	\nabla
	T(t,s)u(x)=\int\limits_{\R^d}u(U(s,t)x+g(t,s)k(t,s,y)\left(U^T(s,t)Q^{-1}_{s,t}y\right)^T\d
	y,\quad t> s\geq0,\ x\in\R^d.
\end{align*}
Now, \eqref{eq:GradientEstimate} follows similarly as above.

Since \eqref{eq:1} is uniquely solvable for $\varphi\in C_c^\infty(\R^d)^d$, see Section~\ref{sec:cov}, the
law of evolution is valid, i.e.
\begin{equation}\label{eq:law of evolution}
\tilde T(t,s)\varphi = \tilde T(t,r)\tilde T(r,s)\varphi,
\end{equation}
holds for $ 0\leq s \leq r \leq t $ and every $\varphi \in C_{c}^\infty(\R^d)^d$. 
The density of $C_{c}^\infty(\R^d)^d$ in $L^p(\R^d)^d$ yields that (\ref{eq:law of evolution}) even holds for all $\varphi\in L^p(\R^d)^d$. 

In order to prove the strong continuity of the map $(t,s)\mapsto \tilde T (t,s)$ on $0\leq s \leq t $ we apply the change of the variables $y=Q_{t,s}^{1/2}z$, to see that
$$
\tilde T(t,s)\varphi(x)= \frac{1}{(4\pi)^{\frac d 2}}U(t,s)\cdot \int_{\R^d}
\varphi(U(s,t)x+g(t,s)-Q_{t,s}^{\frac 1 2}z) \e^{-\frac{|z|^2}{4}} \d z
$$
holds. For $t>s$ fixed, we pick two sequences $(t_n)_{n\in \N}$ and $(s_n)_{n\in \N}$ such that $t_n \geq s_n $ holds for every $n\in \N$ and $(t_n,s_n)\to (t,s)$ as $n \to \infty$. For every $\varphi \in C_c^{\infty}(\R^d)^d$ and every
$x\in\R^d$ we now obtain
$$
\varphi(U(s_n,t_n)x+g(t_n,s_n)-Q_{t_n,s_n}^{\frac 1 2}z)
\rightarrow \varphi(U(s,t)x+g(t,s)-Q_{t,s}^{\frac 1 2}z)
$$
as $n\rightarrow \infty$. Lebegue's theorem now yields
$\tilde T(t_n,s_n)\varphi \rightarrow\tilde T(t,s)\varphi$
as $n\rightarrow \infty$ for every $\varphi\in
C_c^{\infty}(\R^d)^d$. The density of $C_c^{\infty}(\R^d)^d$ in
$L^p(\R^d)^d$ implies the strong continuity.

In order to prove Theorem~\ref{thm:main}(b)
let $u\in L^p_\sigma(\R^d)$, $t-s \leq 1$ and choose $(u_n)_{n\in\N}
\subset C_{c,\sigma}^{\infty}(\R^d) \subset L_{\sigma}^p(\R^d)$,
such that $\lim_{n\to\infty}\|u-u_n\|_{L^p(\R^d)}=0$. The triangle inequality
together with the $L^p$-$L^q$ estimates (\ref{eq:LpLqsmoothing})
imply that there exist constants $C_1, C_2>0$ such that 
\begin{align*}
&(t-s)^{\frac d 2 \left(\frac 1 p - \frac 1 q \right)}
\|T(t,s)u\|_{L_{\sigma}^q(\R^d)}\\
&\qquad \leq (t-s)^{\frac d 2 \left(\frac 1 p - \frac 1 q \right)}
\|T(t,s)(u-u_n)\|_{L_{\sigma}^q(\R^d)} + (t-s)^{\frac d 2 \left(\frac 1 p - \frac 1
q
\right)} \|T(t,s)u_n\|_{L_{\sigma}^q(\R^d)}\\
&\qquad \leq  C_1 \|u-u_n\|_{L_{\sigma}^p(\R^d)} + C_2 (t-s)^{\frac d 2 \left(\frac 1 p -
\frac 1 q \right)} \|u_n\|_{L_{\sigma}^q(\R^d)},\quad 0\leq t-s\leq 1,\
n\in\N.
\end{align*}
Hence,
$
	\lim\limits_{t\to s}(t-s)^{\frac d 2 \left(\frac 1 p - \frac 1 q
	\right)} \|T(t,s)u\|_{L_{\sigma}^q(\R^d)}=0
$
by letting first $t\rightarrow s$ and then $n\rightarrow \infty$.
The second assertion in Theorem~\ref{thm:main}(b) is proved in a similar
way.
\end{proof}

\section{Representation Formula}
\label{sec:cov}
In this section the representation formula (\ref{eq:solformul}) is derived. The general idea is to do a coordinate transformation in order to eliminate the unbounded drift and the zero order term of the operator $\mathcal A(t)$. For this purpose we set
\begin{equation*}
z := U(s,t)x +g(t,s), 
\end{equation*}
where
\begin{equation*}
g(t,s):= \int_s^t U(s,r) f(r) \d r,
\end{equation*}
and we look for a solution $u$ of (\ref{eq:1}) with initial value  $\varphi\in C_c^{\infty}(\R^d)^d$ in the form
\begin{equation}\label{eq:coord_transf}
u(t,x)= U(t,s) w(t,U(s,t)x+g(t,s)).
\end{equation}

%
By recalling \eqref{eq:evol} we obtain from a straightforward computation that
\begin{align*}
\partial_t u(t,x) &= -M(t) U(t,s)w(t,z) + U(t,s) \Big(\langle U(s,t) M(t) x + U(s,t) f(t), \nabla w_i(t,z) \rangle\Big)_{i=1}^d \\
&\qquad \qquad + U(t,s)\partial_t w (t,z),
\end{align*}\normalsize
holds. Moreover, we can write equation \eqref{eq:coord_transf} component-wise as
\begin{equation*}
u_i(t,x)= \sum_{j=1}^d U_{ij}(t,s) w_j(t,U(s,t)x+g(t,s)), \qquad \mbox{for}\; i=1,\ldots,d,
\end{equation*}
and thus for the spatial derivatives of $u$ we obtain
\begin{align*}
\nabla u_i(t,x) &= \sum_{j=1}^d U_{ij}(t,s) U^*(s,t) \nabla w_j(t,z),\\[0.1cm]
\nabla^2 u_i(t,x) &= \sum_{j=1}^d U_{ij}(t,s) U^*(s,t) \nabla^2 w_j(t,z)U(s,t).\vspace{0.2cm}
\end{align*}
In particular, the drift term can be written as
\begin{align*}
\langle M(t)x + f(t), \nabla u_i(t,x) \rangle &= 
  \sum_{j=1}^d U_{ij}(t,s) \langle U(s,t) M(t)x + U(s,t)f(t) , \nabla w_j(t,z) \rangle. 
\end{align*}\normalsize
Thus, the function $u$ solves problem (\ref{eq:1}) if and only if for every $i=1,\ldots,d$, the function $w_i : \R^d \rightarrow \R$ is a solution to\vspace{0.1cm}
\begin{equation}\label{eq:2}
\left\{\begin{array}{lcll}
\partial_t w_i(t,z)  &=&\mathrm{Tr} [U(s,t) U^*(s,t) \nabla^2 w_i(t,z)], &  \; t> s, z\in \R^d,\\[0.15cm]
w_i(s,z) &=& \varphi_i(z),&  \;z\in \R^d.
\end{array}\right.\vspace{0.1cm}\end{equation}\normalsize
By our transformation we now obtained an uncoupled system of parabolic equations with coefficients only depending on $t$. More 
precisely, for $i=1,\ldots,d$, the equation \eqref{eq:2} is a non-autonomous heat equation. It is well known that such a problem can be 
uniquely solved (cf. \cite[Proposition 2.1]{DPL07}) and that for every $\varphi_i \in C_{c}^\infty(\R^d)$ its unique solution is explicitly given
by the formula
\begin{equation}
w_i(t,z) = \frac{1}{(4\pi)^{\frac d 2} (\det Q_{t,s})^{\frac 1 2}} \int_{\R^d} \varphi_i(z-y)\mathrm e^{-\frac 1 4 \langle Q_{t,s}^{-1}y,y\rangle}  \d y,
\end{equation}
where 
\begin{equation}
Q_{t,s} = \int_s^t U(s,r)U^*(s,r) \d r.
\end{equation}
Now, via \eqref{eq:coord_transf}, the unique solution to our original problem (\ref{eq:1}) is given by the representation formula
 \begin{equation}\label{eq:OUsol}
u(t,x)= (k(t,s,\cdot)\ast u)(U(s,t)x+g(t,s)),
\end{equation}
where the kernel $k(t,s,x)$ is defined in \eqref{eq:kern}.

Note that the right hand side of (\ref{eq:OUsol}) is even well defined
for each $L^p(\R^d)^d$-function $\varphi$. Thus, this explicit formula can be used to define an evolution system on $L^p(\R^d)^d$ in the following way. For $\varphi \in L^p(\R^d)^d$ we set 
\begin{equation*}
\tilde T(t,s)\varphi := \left\{\begin{array}{cl}
\varphi & \mbox{for}\; t=s,\\[0.2cm]
(k(t,s,x)\ast \varphi)(U(s,t)x+g(t,s))&\mbox{for}\; t>s.
\end{array}\right.
\end{equation*}


Since problem (\ref{eq:2})  is uniquely solvable it follows via (\ref{eq:coord_transf}) that $\tilde T(t,s)\varphi$ is the unique solution of (\ref{eq:1}) for initial value $\varphi \in C_{c}^\infty(\R^d)^d$.

\bibliographystyle{amsalpha-ag4}
\bibliography{ref}

\end{document}